
\documentclass[letterpaper, 10 pt, conference]{ieeeconf}  

\IEEEoverridecommandlockouts                              

\overrideIEEEmargins                                      


\usepackage{amssymb}
\usepackage{xfrac}
\usepackage{bigints}
\usepackage{relsize}
\usepackage[letterpaper,top=54pt,bottom=58pt,left=54pt,right=54pt]{geometry}
\usepackage{amsmath, amssymb, amsfonts, amsthm}
\usepackage[makeroom]{cancel}
\usepackage{graphics} 
\usepackage{epsfig} 
\usepackage{mathptmx} 
\usepackage{times} 
\usepackage{amsmath} 
\usepackage{amssymb}  
\usepackage{xcolor}
\usepackage{dsfont}

\usepackage{subcaption}
\usepackage{hyperref}

\makeatletter
\DeclareOldFontCommand{\rm}{\normalfont\rmfamily}{\mathrm}
\DeclareOldFontCommand{\sf}{\normalfont\sffamily}{\mathsf}
\DeclareOldFontCommand{\tt}{\normalfont\ttfamily}{\mathtt}
\DeclareOldFontCommand{\bf}{\normalfont\bfseries}{\mathbf}
\DeclareOldFontCommand{\it}{\normalfont\itshape}{\mathit}
\DeclareOldFontCommand{\sl}{\normalfont\slshape}{\@nomath\sl}
\DeclareOldFontCommand{\sc}{\normalfont\scshape}{\@nomath\sc}
\makeatother

\usepackage{csquotes}  
\newcommand\q{\enquote}

\usepackage{physics}   




%
%


%

%
%

%






\newcommand \Limsup {\mathop{\overline{\lim}}}


\newcommand \N   {\mathbb{N}}
\newcommand \R   {\mathbb{R}}

%



\newcommand \Kinf{\mathcal{K_\infty}}

\newcommand \KL  {\mathcal{KL}}


\newcommand{\vertiii}[1]{{\left\vert\kern-0.25ex\left\vert\kern-0.25ex\left\vert #1 
    \right\vert\kern-0.25ex\right\vert\kern-0.25ex\right\vert}}







\newcommand \id  {\operatorname{id}}

\newcommand{\normt}[1]{{\left\vert\kern-0.25ex\left\vert\kern-0.25ex\left\vert #1 
		\right\vert\kern-0.25ex\right\vert\kern-0.25ex\right\vert}}




%








\newcommand{\mir}[1]{{\color{red}\bf AM: #1}}     





\newif\ifMath					
\newif\ifEngi					

\newif\ifDFGtext					 

\newif\ifAndo              
													
\newif\ifExercises					
\newif\ifSolutions          
\newif\ifGerman							
\newif\ifEnglish						

\newif\ifnothabil						

\newif\ifFuture							

\newif\ifConf                    
\newif\ifJournal								 

\newif\ifNOTFORBOOK
\newif\ifFullVersion
\newif\ifExludedDueToSpaceReasons

\usepackage{xifthen}

\newcommand{\einsnorm}[2]{\ensuremath{
    \!\!\;\!\!\!\;
    \left\bracevert\!\!\!\!\!\left\bracevert
    \!
		\ifthenelse{\isempty{#2}}{#1}{#1(#2)}
    \!
      \right\bracevert\!\!\!\!\!\right\bracevert
    \!\!\;\!\!\!\;
  }}






\usepackage{xcolor}
\definecolor{blond}{rgb}{0.98, 0.94, 0.75}
	
\newlength\mytemplen
\newsavebox\mytempbox

\makeatletter
\newcommand\mybluebox{%
    \@ifnextchar[
       {\@mybluebox}%
       {\@mybluebox[0pt]}}

\def\@mybluebox[#1]{%
    \@ifnextchar[
       {\@@mybluebox[#1]}%
       {\@@mybluebox[#1][0pt]}}

\def\@@mybluebox[#1][#2]#3{
    \sbox\mytempbox{#3}%
    \mytemplen\ht\mytempbox
    \advance\mytemplen #1\relax
    \ht\mytempbox\mytemplen
    \mytemplen\dp\mytempbox
    \advance\mytemplen #2\relax
    \dp\mytempbox\mytemplen
    \colorbox{blond}{\hspace{1em}\usebox{\mytempbox}\hspace{1em}}}

\makeatother


\makeatletter
\let\origd=\d
\renewcommand*\d{
  \relax\ifmmode
    \mathrm{d}%
  \else
    \expandafter\origd
  \fi
}\makeatother

\usepackage{mathtools}



\makeatletter 
\newcommand{\pushright}[1]{\ifmeasuring@#1\else\omit\hfill$\displaystyle#1$\fi\ignorespaces}
\newcommand{\pushleft}[1]{\ifmeasuring@#1\else\omit$\displaystyle#1$\hfill\fi\ignorespaces}
\makeatother 

\newcounter{syscounter}

\newcounter{WPcounter}
\newcounter{PRcounter}



\newtheorem{remark}{Remark}
\newtheorem{theorem}{Theorem}

\newtheorem{corollary}{Corollary}
\newtheorem{definition}{Definition}
\newtheorem{assumption}{Assumption}

\newtheorem{prop}{Proposition}

\newtheorem*{goal*}{Goal}



\title{\LARGE \bf
Sampled-data and event-triggered control of globally Lipschitz infinite-dimensional systems
}

\author{Rami Katz$^{1}$ and Andrii Mironchenko$^{2}$
\thanks{The work of RK was supported by the European Union through the ERC INSPIRE Grant under Project 101076926. AM has been supported by the Heisenberg grant  MI 1886/3-1 of the German Research Foundation.}
\thanks{$^{1}$R. Katz is with the Department of Industrial Engineering,
        Trento University, Via Sommarive, 9 - 38123 Povo, Italy.
        {\tt\small ramkatsee@gmail.com}}%
\thanks{$^{2}$ A. Mironchenko is with Universit\"at Bayreuth, Mathematisches Institut, Universit\"atsstra\ss e 30, Bayreuth, Germany.
        {\tt\small andersmir@gmail.com}}%
}

\begin{document}

\maketitle
\thispagestyle{empty}
\pagestyle{empty}

\begin{abstract}
We show that if a linear infinite-dimensional system is exponentially stabilizable by compact feedback, it is also stabilizable by means of a sampled-data feedback that is fed through a globally Lipschitz nonlinearity, provided that the sector bound for the nonlinearity and the sampling time is small enough. 
Next we develop a switching-based event-triggered control scheme stabilizing the system with a reduced number of switching events.
We demonstrate our results on an example of finite-dimensional stabilization of a Sturm-Liouville parabolic system.
\end{abstract}

\section{Introduction}

Event-based control is a paradigm that allows for a rigorous digital implementation of continuous-time controllers as well as
for the efficient use of communications, computational, and actuating resources by updating control
inputs only at time instances when it is needed. Due to these important features, a lot of attention has been devoted to event-based control of nonlinear systems \cite{Tab07,PTN15, BoH14,selivanov2015event11} and infinite-dimensional systems \cite{EGM18, KFS21, EKK21, RDE21}.

In the seminal paper \cite{Tab07}, a general event-triggering control scheme for nonlinear ODE systems has been proposed, provided that the system can be input-to-state stabilized by means of classical controllers. 
In the same work, the absence of Zeno behaviour for the sequence of the switches has been shown. 
The finite-dimensional arguments from \cite{Tab07} cannot, however, be directly translated to the infinite-dimensional setting due to the unboundedness of the generators of strongly continuous semigroups. 
Tabuada's approach presumes not only input-to-state stability (ISS) of a closed-loop system but also the existence of a corresponding coercive ISS Lyapunov function. 
For nonlinear ODE systems, such an ISS Lyapunov function always exists if the nonlinearity is Lipschitz continuous \cite{SoW95}. For infinite-dimensional systems, the existence of an ISS Lyapunov function has been shown for evolution equations with Lipschitz nonlinearities in \cite{MiW17c}. However, in contrast to ODE systems, for evolution equations in Banach spaces, the assumption of Lipschitz continuity is quite restrictive as linear systems with unbounded operators do not fall into this class. Significant problems with the development of the coercive converse Lyapunov results for linear systems with admissible input operators have been reported in \cite{JMP20}, where non-coercive ISS Lyapunov theory has been developed for distributed parameter systems. 
Even though converse ISS Lyapunov functions exist for some classes of infinite-dimensional linear systems with boundary inputs, such as hyperbolic systems of the first order (based on constructions from \cite{BaC16}), parabolic systems with Neumann or Robin boundary conditions \cite{ZhZ18}, and for certain classes of analytic systems \cite{MiS25}, the problem of obtaining general converse coercive ISS Lyapunov theorem for linear systems with unbounded input operators remains unresolved, in spite of the recent progress in the infinite-dimensional ISS theory \cite{MiP20}. 
Hence, event-based controllers are developed for PDEs predominantly in a case-by-case manner, see \cite{RDE21,KBT22,EKK21}.

Our main motivation is the paper \cite{LRT03}, where it was shown that linear infinite-dimensional systems, which are exponentially stabilizable by means of a compact feedback, can also be stabilized by zero-order-hold sample-data controller if the input operator is bounded or if the semigroup is analytic. This result is important as it does not presume a specific structure from the system. 

It was shown in \cite{LRT03} by a simple example that compactness of the feedback is necessary for the validity of sample-data schemes even if the input operator is bounded. 
The compactness requirement sets the limits on the applicability of the event-triggered control. Indeed, in \cite[Theorem 2]{Gib80}, it was shown that a strongly stable contraction semigroup on a Hilbert space with a generator $A$ that is not exponentially stable cannot be exponentially stabilized by means of a compact feedback, i.e. $A+B$ does not generate an exponentially stable semigroup for any compact $B$. Note that compact operators can be represented as the limits of sequences of finite rank operators \cite[Theorem 4.4, p. 41]{Con90}, and by \cite[Theorem 8.1.6, p. 349]{CuZ20}, exponential stabilization by finite-rank operators is possible only if the operator $A$ has finitely many unstable eigenvalues.  


\textbf{Contribution.} 
In this work, we present a general methodology for event-based stabilization of linear systems with globally Lipschitz continuous input operators. Infinite dimensional Lur'e-type systems similar to those considered here have been studied in, e.g., \cite{guiver2019infinite111,gilmore2020stability111,gilmore2020infinite1121,logemann2014infinite}. However, to the best of our knowledge, a general approach for event-based stabilization of such systems has not been considered in the literature so far.

        In Section~\ref{sec:Sampled-data stabilization}, we show that the sample-data controller achieves exponential stabilization of linear systems by means of globally Lipschitz controllers provided that the nonlinearity satisfies the sector condition with the small enough sector bound.
Next, in Section~\ref{sec:Event-based stabilization}, we demonstrate that the switching-based event-trigger control scheme, first proposed in \cite{selivanov2015event11}, can be effectively employed in \eqref{eq:AbstractSys} with \eqref{eq:SampDat} to reduce the number of controller updates. 
For this latter goal, we use the converse ISS Lyapunov results for evolution equations shown in \cite{MiW17c}.

In this paper, we concentrate on the case where the input operators are bounded. More complex case when the input operator is unbounded and the semigroup is analytic will be treated in the future works.





\smallskip
\textbf{Notation.} 
By $\N$ we denote the set of positive integers; $\mathbb{N}_0:=\N\cup\{0\}$. The set of nonnegative reals we denote by $\R_+$. The Euclidean norm on $\R^n$ is denoted by $|\cdot|$. Given a Hilbert space $H$, the standard norm on $H$ is denoted by $\left\|\cdot \right\|_H$. The space of bounded linear operators from $U$ to $H$ is denoted by $\mathcal{B}(U,H)$ with the corresponding norm $\left\| \cdot\right\|_{\mathcal{B}(U,H)}$. For short, we write $\mathcal{B}(H):=\mathcal{B}(H,H)$. For an unbounded linear operator $A$ on $H$ we denote by $\operatorname{dom}(A)$ the domain of $A$. By $L_2(0,L)$ we denote the Hilbert space of scalar and square integrable functions on the interval $(0,L)$. A function $f:[0,a)\to \R_+$ belongs to $\mathcal{K}$ if $f(0)=0$ and $f$ is continuous and strictly increasing. $f$ belongs to $\mathcal{K}_{\infty}$ if, in addition, $a=\infty$ and $\lim_{r\to \infty}f(r)=\infty$. A continuous function $g:\R_+^2\to \mathbb{R}$ belongs to $\mathcal{K}\mathcal{L}$ if $g(\cdot,\ell)\in \mathcal{K}$ for each $\ell \in\R_+$ and  for each $r\in\R_+$, $g(r,\cdot)$ is decreasing and tends to $0$ at $\infty$.



\section{Problem formulation}

We consider the following system with state space $H$ and input space $U$, which are both Hilbert spaces
\begin{equation}\label{eq:AbstractSys}
\begin{array}{lll}
\dot{x}(t) = Ax(t)+Bf\left(u(t) \right),\quad x(0)=x_0.
\end{array}
\end{equation}
Here $B\in \mathcal{B}(U,H)$ and  $A:\operatorname{dom}(A)\subset H \to H$ is the infinitesimal generator of a $C_0$-semigroup $\left\{T(t)\right\}_{t\geq 0}$ on $H$ with 
\begin{center}
$\left\|T(t)\right\|_{\mathcal{B}(H)}\leq Me^{\nu t},\quad t\geq 0$ for $M\geq 1$ and $\nu>0$.    
\end{center}
Next we assume that the the pair $(A,B)$ is exponentially stabilizable via compact feedback. 
\begin{assumption}\label{assum:StabilizingFeed}
There exists a compact operator $F\in \mathcal{B}(H,U)$ such that the operator $A+BF$ generates an exponentially stable $C_0$-semigroup on $H$ denoted by $\left\{T_{BF}(t) \right\}_{t\geq 0}$. 
That is, there exist $N\geq 1$ and $\xi>0$ such that 
\begin{equation}\label{eq:ExpStabTbf}
\begin{array}{lll}
\left\|T_{BF}(t) \right\|_{\mathcal{B}(H)} \leq Ne^{-\xi t},\ t\geq 0.
\end{array}
\end{equation}
\end{assumption}
Subject to Assumption \ref{assum:StabilizingFeed}, the authors of \cite{LRT03} showed that for a sufficiently small $\tau^*>0$, and any $\tau \in (0,\tau^*)$, the sampled-data feedback controller
\begin{equation}\label{eq:SampDat}
\begin{array}{lll}
u(t) = Fx(k\tau),\quad t\in [k\tau,(k+1)\tau),\ k\in \mathbb{N}
\end{array}
\end{equation}
achieves exponential stabilization of the system 
\[
\dot{x}(t) = Ax(t)+Bu(t).
\]

For the function $f$, we make the following assumption:
\begin{assumption}\label{eq:AssumpLipsch}
$f\circ F:H\to U$ is globally Lipschitz continuous. Moreover, 
$f\circ F$ satisfies the following \emph{sector condition}
\begin{equation}\label{eq:SectorCond}
\begin{array}{lll}
\left\|f(Fx)-Fx \right\|_{U}\leq \theta_{f}\left\|x \right\|_{H},\qquad x\in H,
\end{array}
\end{equation}
for a constant $\theta_f>0$. Note that \eqref{eq:SectorCond} implies that $f(0)=0$.
\end{assumption}


\begin{remark}
By a suitable modification of the arguments presented in this paper, one can obtain analogous results for sampled-data and ET stabilization of semilinear systems 
\begin{equation}
\label{eq:Nonlin-Sys}
\begin{array}{lll}
\dot{x}(t) = Ax(t)+ f(x(t)) +Bu(t),\quad x(0)=x_0
\end{array}
\end{equation}
by means of linear controllers, and where $f:H\to H$ is continuously differentiable and satisfies the sector condition $\left\|f(x) \right\|_H\leq \theta_f\left\|x \right\|_H$ for all $x\in H$ with $\theta_f>0$ small enough. Due to space limitations, we omit the explicit details.
\end{remark}

\section{Sampled-data stabilization of \eqref{eq:AbstractSys}}
\label{sec:Sampled-data stabilization}

Motivated by \cite{LRT03}, in this section we consider \eqref{eq:AbstractSys} and show that for small enough $\theta_f$ and $\tau>0$ in \eqref{eq:SectorCond}, the sampled-data feedback controller \eqref{eq:SampDat}, achieves exponential stabilization of the closed-loop system.
We will exploit it for the development of the event-based schemes in Section~\ref{sec:Event-based stabilization}.

Pick any $\tau>0$. 
With the notation $x_k:=x(k\tau)$, $k\in\N$, we can write down the dynamics of the closed-loop system \eqref{eq:AbstractSys} with a sample-data controller \eqref{eq:SampDat} at time moments $\left\{\tau k \right\}_{k=0}^{\infty}$ as the discrete-time system 

\begin{equation}\label{eq:DiscLip}
\begin{array}{lll}
&x_{k+1} = \Psi_{\tau}(x_k), 
\end{array}
\end{equation}
where for any $x \in H$
\begin{align}
\Psi_{\tau}(x) 
&:=  T(\tau)x+\int_0^{\tau}T(s)Bf(Fx) \mathrm{d}s.
\end{align}
We say that $\Psi_{\tau}$ is \emph{power stable} if there exist $L\in \mathbb{N}$ and $q\in (0,1)$ such that for all $x_0\in H$, the solution $\left\{x_k \right\}_{k\in \mathbb{N}}$ of \eqref{eq:DiscLip} satisfies
\begin{equation}\label{eq:PowerStab}
\begin{array}{lll}
\left\|x_k \right\|_H\leq L q^k\left\|x_0\right\|_H.
\end{array}
\end{equation}
In other words, $\Psi_{\tau}$ is power stable if the system \eqref{eq:DiscLip} if globally exponentially stable.

The following proposition guarantees that $\Psi_{\tau}$ is power stable provided that $\tau$ and $\theta_{f}$ are small enough.

\begin{prop}
\label{Prop:NonlinPowStab}
Let the constant $\theta_f$ satisfy the condition
\begin{equation}\label{eq:PowerStabCond}
\begin{array}{lll}
\theta_f<\xi\beta^{-1},\quad 
\beta := MN \left\|B\right\|_{\mathcal{B}(U,H)}.
\end{array}
\end{equation}
Then, there exists $\tau_*>0$ such that that $\Psi_{\tau}$ is power stable for all $\tau\in (0,\tau_*)$.
\end{prop}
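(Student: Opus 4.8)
The plan is to treat the one-step map $\Psi_\tau$ as a perturbation of the exponentially stable operator $T_{BF}(\tau)$ and to propagate the decay of $T_{BF}$ through the iteration by a discrete variation-of-constants formula combined with a discrete Gronwall inequality. First I would write $f(Fx)=Fx+g(x)$ with $g(x):=f(Fx)-Fx$, so that the sector condition \eqref{eq:SectorCond} reads $\left\|g(x)\right\|_U\le\theta_f\left\|x\right\|_H$. Introducing the linear sampled-data operator $\Phi_\tau x:=T(\tau)x+\int_0^\tau T(s)BFx\,\mathrm{d}s$, I decompose
\[
\Psi_\tau(x) = T_{BF}(\tau)x + \big(\Phi_\tau-T_{BF}(\tau)\big)x + \int_0^\tau T(s)Bg(x)\,\mathrm{d}s .
\]
Using $\left\|T(s)\right\|_{\mathcal{B}(H)}\le Me^{\nu s}$, the last integral is bounded by $c_\tau\theta_f\left\|x\right\|_H$ with $c_\tau:=\tfrac{M}{\nu}(e^{\nu\tau}-1)\left\|B\right\|_{\mathcal{B}(U,H)}$. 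Hence $\Psi_\tau(x)=T_{BF}(\tau)x+w(x)$ with $\left\|w(x)\right\|_H\le\delta_\tau\left\|x\right\|_H$, where $\delta_\tau:=\varepsilon_\tau+c_\tau\theta_f$ and $\varepsilon_\tau:=\left\|\Phi_\tau-T_{BF}(\tau)\right\|_{\mathcal{B}(H)}$.

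The crucial step, and the one I expect to be the main obstacle, is to show that $\varepsilon_\tau=o(\tau)$ as $\tau\to0^+$; this is precisely where the compactness of $F$ in Assumption~\ref{assum:StabilizingFeed} is indispensable (a merely bounded $F$ yields only $\varepsilon_\tau=O(\tau)$, which would ruin the clean threshold). Using the Duhamel identity $T_{BF}(\tau)x=T(\tau)x+\int_0^\tau T(\tau-s)BF\,T_{BF}(s)x\,\mathrm{d}s$ together with the substitution $s\mapsto\tau-s$ in $\Phi_\tau$, I obtain
\[
\Phi_\tau-T_{BF}(\tau)=\int_0^\tau T(\tau-s)BF\big(I-T_{BF}(s)\big)\,\mathrm{d}s ,
\]
so that $\varepsilon_\tau\le Me^{\nu\tau}\int_0^\tau\eta(s)\,\mathrm{d}s$ with $\eta(s):=\left\|BF\big(I-T_{BF}(s)\big)\right\|_{\mathcal{B}(H)}$. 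Since $BF$ is compact and both $T_{BF}(s)$ and its adjoint converge strongly to the identity as $s\to0^+$ (on a Hilbert space), approximating $BF$ by finite-rank operators gives $\eta(s)\to0$; consequently $\tfrac1\tau\int_0^\tau\eta(s)\,\mathrm{d}s\to0$, which is the claimed $\varepsilon_\tau=o(\tau)$.

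With this in hand, the iteration $x_{k+1}=T_{BF}(\tau)x_k+w(x_k)$ and the discrete variation-of-constants formula, invoking $T_{BF}(\tau)^m=T_{BF}(m\tau)$ and $\left\|T_{BF}(m\tau)\right\|_{\mathcal{B}(H)}\le Ne^{-\xi m\tau}$ from \eqref{eq:ExpStabTbf}, yield
\[
\left\|x_k\right\|_H\le N\rho^k\left\|x_0\right\|_H+N\delta_\tau\rho^{k-1}\sum_{j=0}^{k-1}\rho^{-j}\left\|x_j\right\|_H,\qquad \rho:=e^{-\xi\tau}.
\]
Applying the discrete Gronwall inequality to the sequence $\rho^{-k}\left\|x_k\right\|_H$ produces $\left\|x_k\right\|_H\le N\left\|x_0\right\|_H(\rho+N\delta_\tau)^k$, so $\Psi_\tau$ is power stable with $L=N$ and $q=\rho+N\delta_\tau$ as soon as $\rho+N\delta_\tau<1$, i.e. $N(\varepsilon_\tau+c_\tau\theta_f)<1-e^{-\xi\tau}$.

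It remains to verify this threshold for small $\tau$. Dividing by $\tau$ and letting $\tau\to0^+$, the left-hand side tends to $NM\left\|B\right\|_{\mathcal{B}(U,H)}\theta_f=\beta\theta_f$ (since $\varepsilon_\tau=o(\tau)$ and $c_\tau/\tau\to M\left\|B\right\|_{\mathcal{B}(U,H)}$), while the right-hand side tends to $\xi$. The hypothesis \eqref{eq:PowerStabCond}, namely $\theta_f<\xi\beta^{-1}$, makes this limiting inequality strict, so by continuity there exists $\tau_*>0$ such that $N(\varepsilon_\tau+c_\tau\theta_f)<1-e^{-\xi\tau}$, and hence $q<1$, for every $\tau\in(0,\tau_*)$. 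This delivers the desired power stability and is fully consistent with the linear sampled-data result of \cite{LRT03}, of which the present statement is the globally Lipschitz counterpart.
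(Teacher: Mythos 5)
Your proof is correct and rests on the same basic strategy as the paper's: split the one-step map into the linear sampled-data operator plus the sector-bounded remainder, bound the remainder by $\theta_f M\left\|B\right\|_{\mathcal{B}(U,H)}\frac{e^{\nu\tau}-1}{\nu}\left\|x\right\|_H$ (your estimate here is literally the paper's \eqref{eq:PhiLip}), and win because $\theta_f\beta<\xi$ makes the nonlinear contribution of order $\theta_f\beta\tau$ lose to the linear decay of order $\xi\tau$. The execution differs in two genuine ways. First, the paper imports the key linear estimate as a black box: it cites \cite[Theorem 3.1]{LRT03} for $\left[\left[\Delta_\tau\right]\right]_{\mathcal{B}(H)}\le 1-\xi\tau+o(\tau)$, whereas you re-derive the underlying fact $\left\|\Delta_\tau-T_{BF}(\tau)\right\|_{\mathcal{B}(H)}=o(\tau)$ from scratch via the Duhamel identity and the compactness of $BF$ (correctly using that on a Hilbert space the adjoint semigroup is also strongly continuous, so that a compact operator composed with $I-T_{BF}(s)$ tends to zero in norm). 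This makes your argument self-contained and exposes exactly where Assumption~\ref{assum:StabilizingFeed}'s compactness is used, at the cost of length. Second, the mechanisms for propagating decay through the iteration differ: the paper passes to the equivalent norm $\left[\left[\cdot\right]\right]_H$ of \eqref{eq:EquivNorm}, in which $\Psi_\tau$ becomes a strict contraction with factor $1-(\xi-\theta_f\beta)\tau+o(\tau)$, so power stability is immediate; you stay in the original norm, write the discrete variation-of-constants formula around $T_{BF}(\tau)$, and close with a discrete Gronwall inequality, arriving at the same threshold $\theta_f<\xi\beta^{-1}$ and the same constants $L=N$, $q<1$. Both routes are sound; the paper's is shorter, yours is more elementary in the sense of not relying on the cited theorem, and your remark that a merely bounded $F$ would only give $O(\tau)$ correctly identifies why compactness cannot be dropped.
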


\begin{proof}
We represent $\Psi_{\tau}$ as $\Psi_{\tau} := \Delta_{\tau} + \Phi_{\tau}$, with
\begin{equation*}
\begin{array}{lll}
&\Delta_{\tau}(x) := T(\tau)x+\int_0^{\tau}T(s)BFx \mathrm{d}s,\vspace{0.1cm} \\
&\Phi_{\tau}(x) := \int_0^{\tau}T(s)B\left[f\left(Fx \right)-Fx \right]\mathrm{d}s.
\end{array}
\end{equation*}


Let us take $x,y\in H$. Then,
\begin{equation}\label{eq:PhiLip}
\begin{array}{lll}
\left\|\Phi_{\tau}(x) \right\|_H &\le \int_0^{\tau} \left\|T(s) B \left[f\left(Fx \right) - Fx \right] \right\|_H \mathrm{d}s\\
&\leq M \left\|B \right\|_{\mathcal{B}\left(U,H \right)}\left\|f\left(Fx \right)-Fx \right\|_{H}\int_0^{\tau}e^{\nu s}\mathrm{d}s\\
&\leq \theta_f M\left\|B \right\|_{\mathcal{B}\left(U,H \right)}\frac{e^{\nu \tau}-1}{\nu}\left\|x \right\|_H .
\end{array}
\end{equation}

Consider $\Delta_{\tau}$ and the following equivalent norm on $H$
\begin{equation}\label{eq:EquivNorm}
\left[\left[x  \right] \right]_H = \sup_{t\geq 0} \left\|e^{\xi t}T_{BF}(t)x \right\|_H,
\end{equation}
which satisfies $\left\|x \right\|_H\leq \left[\left[x \right] \right]_H\leq N \left\|x \right\|_H$ for all $x\in H$. Then, \cite[Theorem 3.1]{LRT03} implies that 
\begin{equation}\label{eq:PowerStabDelta}
\begin{array}{lll}
\left[ \left[\Delta_{\tau} \right] \right]_{\mathcal{B}(H)}\leq 1-\xi\tau+o(\tau),\quad \tau\to 0^+.
\end{array}
\end{equation}
On the other hand, from \eqref{eq:PhiLip} we obtain
\begin{equation}\label{eq:PhiLip1}
\begin{array}{lll}
\left[ \left[\Phi_{\tau}(x) \right] \right]_H \leq \theta_{f}\beta \frac{e^{\nu t}-1}{\nu} \left\|x  \right\|_H 
\leq \theta_{f}\beta \frac{e^{\nu t}-1}{\nu} \left[\left[x \right] \right]_H.
\end{array}
\end{equation}
Hence, combining \eqref{eq:PowerStabDelta} and \eqref{eq:PhiLip1}, we have 
\begin{equation}\label{eq:PsiPointwise}
 \begin{array}{lll} 
\left[\left[\Psi_{\tau}(x) \right] \right]_H &\leq \left[\left[\Delta_{\tau}x \right] \right]_H+ \left[\left[\Phi_{\tau}(x) \right] \right]_H\\
&\leq \left(1-\xi \tau +\theta_f \beta \tau +o(\tau) \right)\left[\left[x \right] \right]_H,
 \end{array}
\end{equation}
as $\tau \to 0^+$. Therefore, condition \eqref{eq:PowerStabCond} guarantees that there exists $\tau_*>0$ such that for $\tau \in (0,\tau_*)$ and all $x\in H$, we have $\left[\left[\Psi_{\tau}(x) \right] \right]_H\leq q \left[ \left[x \right] \right]_H$,
where $q\in (0,1)$. Recalling \eqref{eq:DiscLip} and returning to the original norm yields
\begin{equation*}
\begin{array}{lll}
\left\|x_k \right\|_H \leq \left[\left[ x_k\right] \right]_H \leq q^k \left[\left[x_0 \right] \right]_H \leq N q^k \left\|x_0 \right\|_H. \hfill \blacksquare
\end{array}
\end{equation*}
\end{proof}
Proposition \ref{Prop:NonlinPowStab} implies that for $\tau \in (0,\tau_*)$, the sampled-data feedback controller \eqref{eq:SampDat} exponentially stabilizes \eqref{eq:AbstractSys}.
\begin{corollary}\label{Cor:StabSamp}
Under the same condition as Proposition \ref{Prop:NonlinPowStab} and $\tau \in (0,\tau_*)$, the closed-loop system \eqref{eq:AbstractSys} with controller \eqref{eq:SampDat} is exponentially stable, i.e. there exist $G\geq 1$,$\chi>0$ such that
\begin{align}
\label{eq:ExpStab-SD-control}
\left\| x(t)\right\|_H\leq Ge^{-\chi t}\left\|x_0 \right\|_H,\quad \forall t\geq 0.     
\end{align}
\end{corollary}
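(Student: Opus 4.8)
The plan is to upgrade the discrete-time power stability at the sampling instants, already furnished by Proposition~\ref{Prop:NonlinPowStab}, to a continuous-time exponential bound by controlling the growth of the solution \emph{between} consecutive sampling times. Since the zero-order-hold controller \eqref{eq:SampDat} freezes the input at $Fx_k$ on each interval $[k\tau,(k+1)\tau)$, the mild solution of \eqref{eq:AbstractSys} on that interval is, writing $\sigma := t-k\tau\in[0,\tau)$,
\[
x(t) = T(\sigma)x_k + \int_0^{\sigma} T(s)Bf(Fx_k)\,\mathrm{d}s,
\]
which is precisely the expression whose value at $\sigma=\tau$ reproduces the map $\Psi_{\tau}$ from \eqref{eq:DiscLip}.

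First I would derive a uniform inter-sample bound. Using $\left\|T(s)\right\|_{\mathcal{B}(H)}\le Me^{\nu\tau}$ for $s\in[0,\tau]$, the boundedness of $B$, and the sector condition \eqref{eq:SectorCond} in the form $\left\|f(Fx_k)\right\|_U\le\bigl(\left\|F\right\|_{\mathcal{B}(H,U)}+\theta_f\bigr)\left\|x_k\right\|_H$, a routine estimate of the displayed mild solution yields a constant
\[
C := Me^{\nu\tau}\Bigl(1+\left\|B\right\|_{\mathcal{B}(U,H)}\bigl(\left\|F\right\|_{\mathcal{B}(H,U)}+\theta_f\bigr)\tau\Bigr)
\]
such that $\left\|x(t)\right\|_H\le C\left\|x_k\right\|_H$ for every $t\in[k\tau,(k+1)\tau)$. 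Crucially, $C$ does not depend on $k$ or on $x_0$.

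Next I would chain this with the power-stability estimate \eqref{eq:PowerStab}, $\left\|x_k\right\|_H\le Lq^k\left\|x_0\right\|_H$, obtaining $\left\|x(t)\right\|_H\le CLq^k\left\|x_0\right\|_H$ on $[k\tau,(k+1)\tau)$, and then convert the geometric decay in $k$ into exponential decay in $t$. Since $t\in[k\tau,(k+1)\tau)$ forces $k>t/\tau-1$ and $q\in(0,1)$, one has $q^k\le q^{-1}q^{t/\tau}=q^{-1}e^{-\chi t}$ with $\chi:=\tau^{-1}\ln(1/q)>0$. Setting $G:=\max\{1,\,CLq^{-1}\}\ge 1$ then delivers \eqref{eq:ExpStab-SD-control}.

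The argument is essentially bookkeeping, and no new analytical difficulty arises beyond Proposition~\ref{Prop:NonlinPowStab}. The two points deserving care are verifying that $C$ is genuinely uniform in $k$ — which hinges on the sector condition bounding $\left\|f(Fx_k)\right\|_U$ \emph{linearly} in $\left\|x_k\right\|_H$, so that no uncontrolled growth accumulates within a sampling period — and tracking the off-by-one factor $q^{-1}$ in the passage from $q^k$ to $e^{-\chi t}$. I note in particular that the unboundedness of $A$ is irrelevant here: on each sampling interval the dynamics are driven by a \emph{bounded} constant forcing $Bf(Fx_k)\in H$, and the semigroup enters only through its finite-horizon bound $Me^{\nu\tau}$.
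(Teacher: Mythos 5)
Your proof is correct and takes essentially the same route as the paper's: a uniform inter-sample growth bound (the paper phrases it through the decomposition $x(t)=\Delta_{\ell}x(k\tau)+\Phi_{\ell}(x(k\tau))$, while you estimate the mild-solution formula directly, but the content is identical), chained with the power-stability estimate at the sampling instants from Proposition~\ref{Prop:NonlinPowStab}, followed by the standard conversion of $q^k$ into $e^{-\chi t}$ with the same off-by-one factor $q^{-1}$. No gaps; if anything, your explicit constant $C$ is cleaner than the paper's unjustified claim that $\left\|\Delta_{\ell}\right\|\leq 1$ in the original norm.
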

\begin{proof}
For $t\geq 0$, write $t=k\tau+ \ell\in [k\tau, (k+1)\tau)$. Then,
\begin{equation*}
\begin{array}{lll}
x(t) = \Delta_{\ell}x(k\tau)+\Phi_{\ell}(x(k\tau)).
\end{array}
\end{equation*}
Since $\left\| \Delta_{\ell}\right\|_H\leq 1$  and $\left\|\Phi_{\ell}(x) \right\|_H\leq A \left\|x \right\|_H$ for some constant $A>0$ and all $\ell\in [0,\tau]$, we conclude that 
\begin{equation*}
\begin{array}{lll}
\left\|x(t) \right\|_H&\leq \left(A+1 \right)\left\|x(k\tau) \right\|_{H}\leq (A+1)N q^k\left\|x_0 \right\|_H\\
& \leq (A+1)Ne^{-\ln(q)}e^{\frac{\ln(q)}{\tau}t} \left\|x_0 \right\|_H. \hfill \blacksquare
\end{array}
\end{equation*}
\end{proof}

\section{Event-based stabilization of \eqref{eq:AbstractSys}}
\label{sec:Event-based stabilization}

\subsection{Disturbed closed-loop system}

In the next section, we study the closed-loop \eqref{eq:AbstractSys} with a controller \eqref{eq:SampDat} subject to \eqref{eq:PowerStabCond} and a switching-based event-trigger control scheme. 
Our analysis relies on a suitable converse Lyapunov theorem.

Let
$\mathfrak{D}$ denote the set of disturbances, containing all $d:\R_+\to H$ which are locally Lipschitz continuous. We begin by considering the following system
\begin{equation}\label{eq:ClosedLoopCont}
\begin{array}{lll}
\dot{x}(t) = Ax(t)+Bf\left(Fx(t) \right)+d(t),\quad x(0)=x_0,
\end{array}
\end{equation}
where $d\in \mathfrak{D}$ and the sector-bound parameter of $f$ satisfies \eqref{eq:PowerStabCond}. 
As the nonlinearity in \eqref{eq:ClosedLoopCont} is globally Lipschitz, the mild solution of \eqref{eq:ClosedLoopCont} corresponding to the initial condition $x_0$ and input $d$ exists on $\R_+$ and is unique.
We will henceforth denote by $\phi(t;x_0,d)$ the flow induced by \eqref{eq:ClosedLoopCont}.

For a continuous function $y:\R \to \R$, define 
the right upper Dini derivative by $D^+y(t):=\Limsup_{h \to +0}\frac{y(t+h)-y(t)}{h}$.

\begin{definition}\label{def:ISSLyap}
Consider the system \eqref{eq:ClosedLoopCont}, where $d\in \mathfrak{D}$. We say that a continuous function $V:H\to \R_+$ is a \emph{coercive ISS Lyapunov function} for \eqref{eq:ClosedLoopCont} if the following conditions hold:
\begin{enumerate}
    \item There exist $\psi_1,\psi_2 \in \mathcal{K}_{\infty}$ such that for all $x\in H$
    \begin{equation*}
    \begin{array}{lll}
    \psi_1\left(\left\| x\right\|_H \right)\leq V(x) \leq  \psi_2\left(\left\| x\right\|_H \right).
    \end{array}
    \end{equation*}
    \item There exist $\alpha,\gamma \in \mathcal{K}_{\infty}$ such that for all $x\in H$ and $d\in \mathfrak{D}$
    \begin{align*}
    \hspace{-8mm}\dot{V}_d(x):= D^+V(\phi(t,x,d))|_{t=0}
    &=\limsup_{h\to 0^+}\frac{V(\phi(h;x,d))-V(x)}{h}\\
    &\leq -\alpha\left(V(x) \right)+ \gamma \left(\left\|d(0) \right\|_H \right).
    \end{align*}   
\end{enumerate}
\end{definition}

The next \emph{converse ISS Lyapunov theorem} is of independent interest.
\begin{prop}\label{prop:ConvISS}
Consider \eqref{eq:ClosedLoopCont} with $d\in \mathfrak{D}$ and subject to Assumptions \ref{assum:StabilizingFeed},\ref{eq:AssumpLipsch}. Let $\theta_f>0$ satisfy $\theta_f<\xi M\beta^{-1}$ and choose $\zeta \in (\theta_f\beta M^{-1},\xi)$. Let the function 
$V:H\to \R_+$, given by 
\begin{equation}\label{eq:Vdef}
\begin{array}{lll}
V(x) := \sup_{t\geq 0}\big\|e^{\zeta t}T_{BF}(t)x \big\|_H,\qquad x \in H.
\end{array}
\end{equation}
Then, $V$ is a coercive and globally Lipschitz ISS Lyapunov functional for \eqref{eq:ClosedLoopCont}. 
In particular, for any $d\in \mathfrak{D}$ and $x\in H$, we have the dissipation inequality 
\begin{equation}\label{eq:DissipIneq}
\begin{array}{lll}
\dot{V}_d(x)
&\leq -\left(\zeta-\theta_f \beta M^{-1} \right)V(x)+N\left\|d(0) \right\|_{H}.
\end{array}
\end{equation}
\end{prop}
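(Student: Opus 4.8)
The plan is to first dispatch the coercivity and Lipschitz properties, which follow from the observation that $V$ is nothing but an equivalent norm on $H$, and then to establish the dissipation inequality \eqref{eq:DissipIneq} by differentiating $V$ along the mild solution written in terms of the stabilized semigroup $T_{BF}$.

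For coercivity, taking $t=0$ in \eqref{eq:Vdef} yields $V(x)\geq \|x\|_H$, while the bound \eqref{eq:ExpStabTbf} together with $\zeta<\xi$ gives $e^{\zeta t}\|T_{BF}(t)x\|_H \leq N e^{(\zeta-\xi)t}\|x\|_H \leq N\|x\|_H$, hence $V(x)\leq N\|x\|_H$. Thus the coercivity bounds hold with $\psi_1(r)=r$ and $\psi_2(r)=Nr$. Since $V$ is a supremum of norms of linear images of $x$, it is itself a norm on $H$; in particular it is subadditive and positively homogeneous, so $|V(x)-V(y)|\leq V(x-y)\leq N\|x-y\|_H$, which gives global Lipschitz continuity with constant $N$.

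For the dissipation inequality I would rewrite the dynamics of \eqref{eq:ClosedLoopCont} around the stabilized generator: setting $g(x):=B[f(Fx)-Fx]$, we have $Ax+Bf(Fx)=(A+BF)x+g(x)$, and by the sector condition \eqref{eq:SectorCond}, $\|g(x)\|_H\leq \theta_f\|B\|_{\mathcal{B}(U,H)}\|x\|_H$. Since $BF$ is a bounded perturbation of $A$ generating $T_{BF}$, the mild solution admits the representation $\phi(h;x,d)=T_{BF}(h)x+r(h)$, where $r(h):=\int_0^h T_{BF}(h-s)[g(\phi(s;x,d))+d(s)]\,ds$. The key structural estimate is that $V$ contracts along the unperturbed stabilized flow: using the semigroup property $T_{BF}(t)T_{BF}(h)=T_{BF}(t+h)$ and the change of variable $\tau=t+h$ in \eqref{eq:Vdef}, one obtains $V(T_{BF}(h)x)=e^{-\zeta h}\sup_{\tau\geq h}\|e^{\zeta\tau}T_{BF}(\tau)x\|_H\leq e^{-\zeta h}V(x)$.

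Combining the splitting $\phi(h;x,d)=T_{BF}(h)x+r(h)$ with the triangle inequality for the norm $V$ and the contraction estimate gives $V(\phi(h;x,d))\leq e^{-\zeta h}V(x)+V(r(h))$, and then $V(r(h))\leq N\|r(h)\|_H$. Dividing by $h$ and passing to the limit, the first term contributes $-\zeta V(x)$. The main technical step is to evaluate $\lim_{h\to 0^+}\|r(h)\|_H/h$: by strong continuity of $T_{BF}$ and continuity of $s\mapsto g(\phi(s;x,d))+d(s)$ at $s=0$, the averaged integrand converges, so $r(h)/h\to g(x)+d(0)$ and hence $\|r(h)\|_H/h\to \|g(x)+d(0)\|_H\leq \theta_f\|B\|_{\mathcal{B}(U,H)}\|x\|_H+\|d(0)\|_H$. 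Using $N\|B\|_{\mathcal{B}(U,H)}=\beta M^{-1}$ and the coercivity bound $\|x\|_H\leq V(x)$, this yields $\dot{V}_d(x)\leq -(\zeta-\theta_f\beta M^{-1})V(x)+N\|d(0)\|_H$, which is exactly \eqref{eq:DissipIneq}; the choice $\zeta\in(\theta_f\beta M^{-1},\xi)$ makes the coefficient $\zeta-\theta_f\beta M^{-1}$ positive, so $\alpha(r)=(\zeta-\theta_f\beta M^{-1})r$ and $\gamma(r)=Nr$ lie in $\mathcal{K}_\infty$ and $V$ is a coercive ISS Lyapunov functional. I expect the justification of the limit $\|r(h)\|_H/h\to\|g(x)+d(0)\|_H$ — i.e., the interchange of the limit with the integral average under only strong continuity of the semigroup — to be the main point requiring care.
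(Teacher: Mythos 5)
Your proof is correct and follows essentially the same route as the paper's: both arguments rewrite the dynamics around the stabilized generator $A+BF$, represent the solution via the variation-of-constants formula for $T_{BF}$, exploit that $V$ is an equivalent norm contracting as $e^{-\zeta h}$ along $T_{BF}$, and bound the perturbation terms through the sector condition together with the identity $N\left\|B\right\|_{\mathcal{B}(U,H)}=\beta M^{-1}$. The one place where you genuinely diverge is the justification of the representation $\phi(h;x,d)=T_{BF}(h)x+r(h)$: you invoke it directly for mild solutions with arbitrary $x\in H$ as a consequence of $BF$ being a bounded perturbation, whereas the paper first restricts to $x_0\in\operatorname{dom}(A)$, upgrades the mild solution to a strong solution (using reflexivity of $H$ and Pazy's Theorem 6.1.6), derives the pointwise dissipation inequality there, integrates it via the comparison principle, and only then extends to all of $H$ by density and continuity of $V$ before differentiating back. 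Your shortcut is legitimate --- for a bounded perturbation the mild solutions of the two formulations coincide for every initial state --- but that equivalence is precisely the step that needs a citation or a short Fubini-plus-uniqueness argument; if you supply it, you obtain a somewhat cleaner proof that bypasses the density step entirely. Your direct verification of coercivity and global Lipschitz continuity (with $\psi_1(r)=r$, $\psi_2(r)=Nr$) replaces the paper's appeal to an external result, and your treatment of the limit $r(h)/h\to g(x)+d(0)$ via strong continuity of $T_{BF}$ and continuity of the integrand at $s=0$ is the standard argument and matches the paper's term-by-term limits.
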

\begin{proof}
Coercivity and global Lipschitzness of $V$ follow from Assumption \ref{assum:StabilizingFeed} and \cite[Proposition 7]{MiW17c}, where $V$ was considered for \emph{linear} systems.

Consider now $x_0\in \operatorname{dom}(A)$ and denote 
\begin{equation*}
h(t,x) :=Bf(Fx)+d(t),\qquad t\geq 0, \ x \in H.
\end{equation*}
Fixing $T>0$, since $d$ is Lipschitz continuous on $[0,T]$, Assumption \ref{eq:SectorCond} implies that $h:[0,T]\times H\to H$ is Lipschitz continuous in both variables. Since $H$ is reflexive, \cite[Theorem 6.1.6]{pazy1983semigroups} shows that the mild solution $x(t)$ of \eqref{eq:ClosedLoopCont} that exists and is unique due to Lipschitz continuity of the nonlinearity in \eqref{eq:ClosedLoopCont}, is a strong solution, meaning that $x(t)$ is differentiable almost (a.e.) everywhere, $\dot{x}$ is in $L^1_{loc}([0,\infty],H)$ and a.e. on $\R_+$ it holds that
\begin{equation}
\begin{array}{lll}
\dot{x}(t) = \left(A+BF \right)x(t)+B\left(f(Fx(t))-Fx(t) \right)+ d(t).
\end{array}
\end{equation}
Therefore, by \cite[Chapter 6.1]{pazy1983semigroups}, the solution $x(t)=\phi(t;x_0,d)$ to \eqref{eq:ClosedLoopCont} with $x_0\in \operatorname{dom}(A)$ can be represented as
\begin{align}\label{eq:StrongSol}
x(t)&= T_{BF}(t)x_0+\int_0^{t}T_{BF}(t-s)d(s) \mathrm{d}s \nonumber\\
&\hspace{12mm}+ \int_0^tT_{BF}(t-s) B\left(f(Fx(s))-Fx(s) \right)\mathrm{d}s.
\end{align}
Now, given $h>0$, $x\in H$ and $d\in \mathfrak{D}$, we use the fact that $V$ is an equivalent norm on $H$ to obtain: 
\begin{align*}
h^{-1}&\Big(V\big(\phi(h;x,d)\big)-V(x) \Big)\\
&\leq \frac{1}{h}\Big( V(T_{BF}(h)x)-V(x)\Big)+V\Big(\frac{1}{h}\int_0^hT_{BF}(h-s)d(s)\mathrm{d}s \Big)\\
&\hspace{10mm}+ V\Big(\frac{1}{h}\int_0^hT_{BF}(h-s) B\left(f(Fx(s))-Fx(s) \right)\mathrm{d}s \Big).
\end{align*}
We analyse the three terms on the right-hand side separately. First, since $V\left(T_{BF}(h)x \right)\leq e^{-\zeta h}V(x)$, we have 
\begin{equation*}
\begin{array}{lll}
h^{-1}\left( V(T_{BF}(h)x)-V(x)\right)\leq \frac{e^{-\zeta h}-1}{h}V(x) \overset{h\to 0^+}{\longrightarrow} -\zeta V(x).
\end{array}
\end{equation*}
By continuity of $V$ and local Lipschitz continuity of $d$,  
\begin{equation*}
\begin{array}{lll}
&V\left(h^{-1}\int_0^hT_{BF}(h-s)d(s)\mathrm{d}s\right) \overset{h\to 0^+}{\longrightarrow}V(d(0))\leq N\left\|d(0) \right\|_H.
\end{array}
\end{equation*}
Finally, by continuity of $V$ and Lipschitz continuity of $f$
\begin{equation*}
\begin{array}{lll}
&\lim_{h\to 0^+}V\left(h^{-1}\int_0^hT_{BF}(h-s) B\left(f(Fx(s))-Fx(s) \right)\mathrm{d}s \right)\\
&\hspace{3mm} = V\left(B\left(f(Fx)-Fx \right) \right)\leq N\left\|B \right\|_{\mathcal{B(U,H)}} \left\|f(Fx)-Fx \right\|_H\\
&\hspace{3mm} \leq \theta_f \beta M^{-1}\left\|x \right\|_H \leq \theta_f \beta M^{-1} V(x).
\end{array}
\end{equation*}
Overall, we obtain that $\eqref{eq:DissipIneq}$ holds, provided $x_0\in \operatorname{dom}(A)$. In particular, denoting $\ell=\zeta-\theta_f \beta M^{-1}$, we have, by the comparison principle \cite[Appendix A]{Mir23},  that 
\begin{equation}\label{eq:ComparisonPr}
\begin{array}{lll}
V(\phi(t;x,d))\leq e^{-\ell t}V(x)+N\int_0^{t}e^{-\ell(t-s)}\left\|d(s) \right\|_{H}\mathrm{d}s
\end{array}
\end{equation}
In general, let  $T>0$, $x_0\in H$, $d\in \mathfrak{D}$ and $\left\{x_n \right\}_{n=1}^{\infty}\subseteq \operatorname{dom}(A)$ such that $\lim_{n\to \infty}x_n=x_0$. For $t\in[0,T]$, substitute $x=x_n$ in \eqref{eq:ComparisonPr} and take $n\to \infty$.  By continuity of $V$, we have $\lim_{n\to \infty}V(x_n)=V(x_0)$. Since $\phi(\cdot;x_n,d)$ converges in $C([0,T],H)$ to $\phi(\cdot;x_0,d)$ \cite[Theorem 6.1.2]{pazy1983semigroups}, continuity of $V$ implies $\lim_{n\to \infty}V\left(\phi(t;x_n,d) \right) = V(\phi(t;x_0,d))$ for $t\in [0,T]$. Thus, \eqref{eq:ComparisonPr} holds on $t\in [0,T]$ with $x=x_0$. Taking $t\in [0,T)$ and $h>0$ such that $t+h\leq T$, we have 
\begin{align}\label{eq:IntegDissip}
&h^{-1}\left(V(\phi(h;x_0,d)) -V(x_0) \right)\nonumber\\
&\leq h^{-1}\big(e^{-\ell h}-1 \big)V(x_0)+h^{-1}N\int_0^{h}e^{-\ell(h-s)}\left\|d(s) \right\|\mathrm{d}s.
\end{align}
Taking $\limsup_{h\to 0^+}$ on both sides of \eqref{eq:IntegDissip}, we have that \eqref{eq:DissipIneq} holds for $x=x_0$. As $T>0$ is arbitrary, the claim is proved.$\hfill \blacksquare$
\end{proof}

\subsection{Switching-based event-triggered (ET) mechanism}

We are now ready to present the considered ET mechanism. In light of Proposition \ref{prop:ConvISS}, we suppose the following:
\begin{assumption}\label{assum:ExistISSLyap}
There is a coercive ISS-Lyapunov function $V:H\to \R_+$ for \eqref{eq:ClosedLoopCont} with corresponding $\psi_1,\psi_2,\alpha,\gamma \in\Kinf$.
\end{assumption}

By Corollary \ref{Cor:StabSamp}, there exists $\tau_*>0$ such that the system \eqref{eq:AbstractSys} with the controller \eqref{eq:SampDat} with $\tau<\tau_*$ is exponentially stable, i.e., \eqref{eq:ExpStab-SD-control} holds. Fix $\tau<\tau_*$, $\sigma\in(0,1)$, and consider the following \emph{switching-based ET controller} \cite{selivanov2015event11} for the system \eqref{eq:AbstractSys}
\begin{subequations}
\label{eq:ETdef}
\begin{align}
&\hspace{-3mm} u(t) = Fx(t_k),\quad t\in [t_k,t_{k+1}),\quad t_0 = 0,\\
& \hspace{-3mm} t_{k+1} = \inf\left\{t\geq t_k+\tau \ ;\right.\\
&\hspace{5mm}\left. \gamma \left( \left\|B\left[f(Fx(t_k)-f(Fx(t)) \right] \right\|_H\right)
\geq \sigma \alpha(V(x))   \right\},
\end{align}
\end{subequations}
where if for some $k\in \mathbb{N}_0 $, the set defining $t_{k+1}$ is empty, we set $t_{k+1}=\infty$. We henceforth refer to the inequality in \eqref{eq:ETdef} as the \emph{ET condition}.

\begin{remark}
By design, \eqref{eq:ETdef} cannot exhibit Zeno behavior, since $t_{k+1}-t_k\geq \tau$ for all $k\in \mathbb{N}_0$. The intuition behind \eqref{eq:ETdef} is that if $t_{k+1}=t_k+\tau$ for all $k\in \mathbb{N}_0$, then the dynamics of \eqref{eq:AbstractSys} subject to 
\eqref{eq:ETdef} is the same as its behavior subject to \eqref{eq:SampDat}, which yields an exponentially stable closed-loop system.     
\end{remark}

\begin{remark}
\label{rem:Comparison-principle}
According to comparison principle \cite[Proposition A.35]{Mir23}, for $\alpha \in\Kinf$ from Assumption~\ref{assum:ExistISSLyap} there is $\kappa \in \KL$ such that
for each $y \in C(\R_+,\R_+)$ satisfying 
\[
D^+ y(t) \le -(1-\sigma)\alpha\left(y(t) \right),\quad t\geq 0,
\]
it holds that $y(t) \leq \kappa(y(0),t)$.    
\end{remark}


With $G$ and $\chi$ as in \eqref{eq:ExpStab-SD-control}, define
\[
\rho(m,n):=\psi_1^{-1}\circ \kappa\left(\psi_2\left(Ge^{-\chi \tau}m \right),n \right)
\]
and introduce the iterates 
\begin{equation}\label{eq:Iterates}
\begin{array}{lll}
&\hspace{-4mm}\mathcal{R}^{(1)}(m,\left\{n\right\}) = \rho(m,n),\\
&\hspace{-4mm}\mathcal{R}^{(N)}(m,\left\{n_j\right\}_{j=1}^N) = \rho\left( \mathcal{R}^{(N-1)}(m,\left\{n_j\right\}_{j=1}^{N-1}),n_N \right),\ N\geq 2.
\end{array}
\end{equation}

We can now formulate our main result
\begin{theorem}\label{thm:MainThm}
If there exist $\mu\in \mathcal{K}\mathcal{L}$ and $\omega\in \mathcal{L}$ such that 
\begin{equation}\label{eq:ETCond}
\begin{array}{lll}
\mathcal{R}^{(N)}(m,\left\{n_j\right\}_{j=1}^N) \leq \mu\left(\omega(N)m,\max_{1\leq j \leq N}n_j \right),
\end{array}
\end{equation}
then the origin of the closed-loop system \eqref{eq:AbstractSys} subject to the ET mechanism \eqref{eq:ETdef} is uniformly globally asymptotically stable (UGAS), that is, there is $\nu \in \KL$ such that the solution $x(t)$ of \eqref{eq:AbstractSys}+\eqref{eq:ETdef} satisfies
\[
\|x(t)\|_H\leq \nu(\|x_0\|_H,t),\quad t\geq 0.
\]
\end{theorem}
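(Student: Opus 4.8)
The plan is to reduce the event-triggered dynamics on each inter-event interval to the disturbed system \eqref{eq:ClosedLoopCont}, extract a scalar recursion for the sampled norms $m_k:=\|x(t_k)\|_H$, iterate it into $\mathcal{R}^{(N)}$, and finally convert the structural hypothesis \eqref{eq:ETCond} into a $\KL$-bound in continuous time.

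First, on an interval $[t_k,t_{k+1})$ the control is frozen at $Fx(t_k)$, so the closed loop is exactly \eqref{eq:ClosedLoopCont} with disturbance $d_k(t):=B\big(f(Fx(t_k))-f(Fx(t))\big)$, which lies in $\mathfrak{D}$ since $x(\cdot)$ is continuous and $f\circ F$ is globally Lipschitz. On the mandatory dwell part $[t_k,t_k+\tau]$ the signal coincides with the sampled-data controller \eqref{eq:SampDat} restarted at $x(t_k)$; hence, by time-invariance, Corollary \ref{Cor:StabSamp} gives $\|x(t_k+\tau)\|_H\le Ge^{-\chi\tau}m_k$, while for $t\in[t_k,t_k+\tau)$ the representation $x=\Delta_\ell x(t_k)+\Phi_\ell(x(t_k))$ used in that proof yields the crude bound $\|x(t)\|_H\le(A+1)m_k$. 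On the monitoring part $[t_k+\tau,t_{k+1})$ the ET condition in \eqref{eq:ETdef} has not yet triggered, so $\gamma(\|d_k(t)\|_H)<\sigma\alpha(V(x(t)))$; inserting this into the dissipation inequality of Assumption \ref{assum:ExistISSLyap} gives $D^+V(x(t))\le-(1-\sigma)\alpha(V(x(t)))$, and Remark \ref{rem:Comparison-principle} then produces $V(x(t))\le\kappa\big(V(x(t_k+\tau)),\,t-t_k-\tau\big)$.

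Combining both parts with the coercivity bounds $\psi_1(\|\cdot\|_H)\le V\le\psi_2(\|\cdot\|_H)$ yields, with $s_k:=t_{k+1}-t_k-\tau\ge0$, the recursion $m_{k+1}\le\rho(m_k,s_k)$. Since $\rho(\cdot,n)$ is increasing (a composition of $\psi_1^{-1}$, $\kappa(\cdot,n)$ and $\psi_2$, all of class $\K$), induction from $m_0=\|x_0\|_H$ gives $m_N\le\mathcal{R}^{(N)}(m_0,\{s_{j-1}\}_{j=1}^N)$, and the hypothesis \eqref{eq:ETCond} upgrades this to $m_N\le\mu\big(\omega(N)\|x_0\|_H,\ \max_{0\le i<N}s_i\big)$. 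For arbitrary $t\ge0$ locate $N$ with $t\in[t_N,t_{N+1})$ and set $a:=t-t_N$; then $\|x(t)\|_H\le(A+1)m_N$ if $a\le\tau$ and $\|x(t)\|_H\le\rho(m_N,a-\tau)$ if $a>\tau$, so $\|x(t)\|_H$ is controlled by a $\K$-function of $m_N$ that additionally carries the $\kappa$-decay in $a$ needed when only finitely many switches occur.

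The delicate final step is to merge these estimates into a single $\nu\in\KL$ of $(\|x_0\|_H,t)$. The key timing inequality is $t_N=N\tau+\sum_{i<N}s_i\le N\big(\tau+\max_{i<N}s_i\big)$, which together with $a\le\tau+s_N$ gives $t\le(N+1)\big(\tau+\max_{0\le i\le N}s_i\big)$. Thus large elapsed time $t$ forces either a large switch count $N$ — whence $\omega(N)\to0$ makes $\mu(\omega(N)\|x_0\|_H,\cdot)$ small — or a large $\max_i s_i$ — whence the monotone decay of $\mu(\cdot,s)$, and of $\kappa(\cdot,a-\tau)$ on a long or infinite final interval, drives the state to zero. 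Packaging this dichotomy, for instance by defining $\nu(r,t):=\sup\{\mu(\omega(N)r,S):N\ge0,\ S\ge0,\ (N+1)(\tau+S)\ge t\}$ and verifying it is of class $\KL$ (finiteness, monotonicity in $r$, and decay to $0$ in $t$ following exactly the above split), delivers $\|x(t)\|_H\le\nu(\|x_0\|_H,t)$ and hence UGAS. I expect this conversion — reconciling the discrete index $N$, the inter-event gaps $s_i$, and continuous time $t$ into one $\KL$ estimate — to be the main obstacle, precisely because a large elapsed time need not correspond to many triggering events.
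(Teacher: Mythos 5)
Your core argument is the same as the paper's: split each inter-event interval into the dwell phase $[t_k,t_k+\tau)$ (handled by Corollary~\ref{Cor:StabSamp}) and the monitoring phase $[t_k+\tau,t_{k+1})$ (handled by viewing the frozen input as the disturbance $d(t)=B\big(f(Fx(t_k))-f(Fx(t))\big)$, invoking Assumption~\ref{assum:ExistISSLyap}, the ET condition, and the comparison principle of Remark~\ref{rem:Comparison-principle}), then iterate $\rho$ into $\mathcal{R}^{(N)}$ and apply \eqref{eq:ETCond}. Two points deserve attention. First, your justification that $d_k\in\mathfrak{D}$ is too quick: $\mathfrak{D}$ consists of \emph{locally Lipschitz} disturbances, and mere continuity of $x(\cdot)$ does not give this. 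The paper first takes $x_0\in\operatorname{dom}(A)$, shows by induction that $x(t_k)\in\operatorname{dom}(A)$ and that the solution is classical (hence $C^1$, hence locally Lipschitz) on each interval, and only then extends all estimates to general $x_0\in H$ by density; you should incorporate this regularity step.

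Second, the final conversion to UGAS is where you diverge from the paper, and your proposed single formula $\nu(r,t):=\sup\{\mu(\omega(N)r,S): (N+1)(\tau+S)\ge t\}$ does not by itself majorize $\|x(t)\|_H$: on the current interval $[t_N+\tau,t_{N+1})$ the decay in the elapsed time $a-\tau$ is governed by $\rho(m_N,\cdot)$ (i.e.\ by $\kappa$), not by $\mu(\cdot,S)$, so the case ``few events, small past gaps, long current interval'' is not covered by that supremum and you would need to take a maximum with an additional term of the form $\sup\rho\big(\mu(\omega(N)r,S'),S\big)$ over the appropriate constraint set, plus the $(A+1)$ (or $G$) prefactor for the dwell phase. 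The paper avoids constructing $\nu$ explicitly: it proves uniform global attractivity via the same ``many events versus one long gap'' dichotomy (the $\epsilon$--$\eta$ argument with $T=(N'-2)(J'+\tau)$), adds local uniform stability and boundedness of reachability sets, and then cites the characterization theorem \cite[Theorem 6.21]{Mir23} to conclude UGAS. Your direct construction is viable and arguably more self-contained, but as written it has a gap in exactly the case you yourself flag as delicate; either patch the supremum as indicated or fall back on the attractivity-plus-stability characterization.
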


\begin{proof}
We first consider the case $x_0\in \operatorname{dom}(A)$. Let $t\in[t_k,t_{k+1})$ with $k\in \mathbb{N}_0$. On this interval, the closed-loop system \eqref{eq:AbstractSys} subject to the ET controller \eqref{eq:ETdef} is given by
\begin{equation}\label{eq:ClosedLoopPf}
\dot{x}(t) = Ax(t)+Bf\left(Fx(t_k) \right), \ t\in [t_k,t_{k+1})
\end{equation}
whence
\begin{equation*}
\begin{array}{lll}
x(t) = T(t-t_k)x(t_k)+\int_{t_k}^tT(t-s)Bf\left(Fx(t_k) \right) \mathrm{d}s.
\end{array}
\end{equation*}
Therefore, by \cite[Theorems 1.2.4 and 6.1.5]{pazy1983semigroups}, and employing induction, we conclude that $x(t_k)\in \operatorname{dom}(A)$ for all $k\in \mathbb{N}_0$ and, furthermore, $x(t)$ is the unique \emph{classical} solution of \eqref{eq:ClosedLoopPf}.

Consider $t\in [t_k,t_k+\tau)$. On this interval, the ET condition is not checked and the system has the dynamics of \eqref{eq:AbstractSys} subject to the sampled-data controller \eqref{eq:SampDat}, meaning that 
\begin{equation}\label{eq:FirstInterval}
\begin{array}{lll}
\left\|x(t) \right\|_H \leq Ge^{-\chi (t-t_k)}\left\|x(t_k) \right\|_H,\quad t\in [t_k,t_k+\tau],
\end{array}
\end{equation}
by Corollary \ref{Cor:StabSamp}. On the interval $[t_k+\tau,t_{k+1})$, we use the fact that $x(t)$ is a classical solution and present \eqref{eq:ClosedLoopPf} as 
\begin{equation}\label{eq:ClosedLoopPf1}
\begin{array}{lll}
\dot{x}(t)= Ax(t) + Bf(Fx(t))+ d(t),\ t\in [t_k+\tau,t_{k+1}),\\
d(t):= Bf(Fx(t_k)) - Bf(Fx(t)).
\end{array}
\end{equation}
As $x$ is a classical solution, it is continuously differentiable, and hence Lipschitz continuous. Thus, $d$ is Lipschitz on $[t_k+\tau,t_{k+1})$ as a composition of Lipschitz maps. 
Employing Assumption \ref{assum:ExistISSLyap} and the ET condition on $[t_k+\tau,t_{k+1})$, we have 
\begin{equation}\label{eq:LyapSecondInterv}
\begin{array}{lll}
D^+V(x(t))&\leq -\alpha\left(V(x(t)) \right)+ \gamma \left(\left\|d(t) \right\|_H \right)\\
&\leq -(1-\sigma)\alpha(V(x(t))).
\end{array}
\end{equation}
Using Remark~\ref{rem:Comparison-principle} with $y(t):=V(x(t))$, we obtain that 
\[
V(x(t)) \leq \kappa(V(x(t_k+\tau)),t-t_k-\tau),\qquad t\in [t_k+\tau,t_{k+1}).
\]
Hence, using Assumption \ref{assum:ExistISSLyap} and \eqref{eq:FirstInterval}
\begin{equation}\label{eq:SecondInterval}
\left\|x(t) \right\|_H\leq \rho(\left\|x(t_k) \right\|_H,t-t_k-\tau),\quad  t\in [t_k+\tau,t_{k+1}].
\end{equation}
Denoting $J_{k+1}:=t_{k+1}-t_k-\tau,\ k\in \mathbb{N}_0$, we see that \linebreak 
$\left\|x(t_k) \right\|_H\leq \mathcal{R}^{(1)}(\left\|x(t_{k-1}) \right\|_H,\{J_k\})$, and by induction we get
\begin{equation}\label{eq:CombInterv}
\begin{array}{lll}
\hspace{-3mm}\left\|x(t_k) \right\|_H&\leq \mathcal{R}^{(k)}\left(\left\| x_0\right\|_H, \left\{ J_j\right\}_{j=1}^k \right)\\
&\overset{\eqref{eq:ETCond}}{\leq} \mu\left(\omega(k)\left\|x_0 \right\|_H, \max_{1\leq j \leq k} J_j\right),\ k\in \mathbb{N}.
\end{array}
\end{equation}

Next, we employ density of $\operatorname{dom}(A)$ in $H$ (see similar arguments in Proposition \ref{prop:ConvISS}) and deduce that \eqref{eq:FirstInterval}, \eqref{eq:SecondInterval}  and \eqref{eq:CombInterv} hold for all $x_0\in H$.

We now show uniform global attractivity of the origin. Let $\eta,\epsilon>0$. Let $N'$ and $J'$ be such that 
\begin{equation}\label{eq:maxcond}
\begin{array}{lll}
&\hspace{-5mm}\max \left\{ G\mu(\omega(N')\eta,0) , \rho\left(\mu(\omega(N')\eta,0),0 \right)\right.\\
&\hspace{8mm}\left.,G \mu(\omega(0)\eta, J'),\rho \left( \mu(\omega(0)\eta, J'),0\right)\right\}\leq \epsilon.
\end{array}
\end{equation}
Let $T=(N'-2)\left(J'+\tau \right)$. We show that $x_0\in H$ and $\left\|x_0 \right\|_H\leq \eta$ implies $\left\|x(t) \right\|_H\leq \epsilon, \ t\geq T$. We consider two cases.

(i) The interval $[0,T]$ contains subintervals $[t_k,t_{k
    +1})$ for some $k\geq N'-1$. Then, any $t\geq T$ belongs to some interval $[t_{\ell},t_{\ell+1})$ for $\ell\geq N'$. If $t\in [t_{\ell},t_{\ell}+\tau)$, then
    \begin{equation*}
    \begin{array}{lll}
    \left\|x(t) \right\|_H\leq Ge^{-\theta (t-t_{\ell})}\left\|x(t_\ell) \right\|_H
    \leq G\mu\left(\omega(N')\eta,0 \right)\leq \epsilon.
    \end{array}
    \end{equation*}
    If $t\in [t_{\ell}+\tau,t_{\ell}+1)$, then 
    \begin{equation*}
    \begin{array}{lll}
    \hspace{-2mm}\left\|x(t) \right\|_H\leq\rho(\left\| x(t_{\ell})\right\|_H,t-t_{\ell}-\tau)\leq \rho\left(\mu\left(\omega(N')\eta,0 \right),0 \right)\leq \epsilon.
    \end{array}
    \end{equation*}
(ii) The interval $[0,T]$ contains $M\leq N'-2$ intervals of the form $[t_{k},t_{k+1})$. Here, we have $\sum_{j=1}^MJ_j+\tau M \leq T$, which implies that there exists some $1\leq j'\leq M$ such that $J_{j'}\geq \frac{T}{M}-\tau \geq \frac{N'-2}{M}\left(J'+\tau \right)-\tau \geq J'$. Let $t\geq T$ such that $t\in [t_{\ell},t_{\ell+1}),\ \ell \geq M+1$. If $t\in [t_{\ell},t_{\ell}+\tau)$, then
    \begin{equation*}
    \begin{array}{lll}
    \left\|x(t) \right\|_H\leq Ge^{-\theta (t-t_{\ell})}\left\|x(t_\ell) \right\|_H\leq \mu\left(\omega(0)\eta,J_{j'} \right)\leq \epsilon.
    \end{array}
    \end{equation*}
    If $t\in[t_{\ell}+\tau,t_{\ell+1})$ then
    \begin{equation*}
    \begin{array}{lll}
    \left\|x(t) \right\|_H&\leq\rho(\left\| x(t_{\ell})\right\|_H,t-t_{\ell}-\tau)\\
    &\leq \rho\left(\mu\left(\omega(0)\eta,J_{j'} \right),0 \right)\leq \epsilon.
    \end{array}
    \end{equation*}
Since the origin is an equilibrium of \eqref{eq:ClosedLoopPf}, by \cite[Theorem 6.1.2]{pazy1983semigroups}, there exists $\delta'>0$ such that $x_0\in H$ with $\left\|x_0 \right\|_H\leq \delta'$ imply $\left\|x(t) \right\|_H\leq \epsilon$ for all $t\in[0,T]$. Furthermore, as the nonlinearity in  \eqref{eq:ClosedLoopPf} is globally Lipschitz, system 
 \eqref{eq:ClosedLoopPf} has bounded reachability sets (see the proof of \cite[Corollary 4.8]{MiW19a}). Hence, \cite[Theorem 6.21]{Mir23}, specialized to systems without inputs, shows that \eqref{eq:ClosedLoopPf} is UGAS.$\hfill \blacksquare$
 
\end{proof}
\begin{remark}
As the proof of Theorem \ref{thm:MainThm} shows, the condition \eqref{eq:ETCond} is used to patch the behavior of the closed-loop system intermittently on intervals of the form $[t_k,t_k+\tau)$ and $[t_k+\tau,t_{k+1})$. The need for patching stems primarily from the absence of converse ISS-Lyapunov theorems for sampled-data systems of the form \eqref{eq:AbstractSys} and \eqref{eq:SampDat}.  The investigation of such ISS-Lyapunov theorems is a topic for future research.
\end{remark}

\begin{corollary}
Let Assumption  \ref{assum:ExistISSLyap} be satisfied with
\begin{equation*}
\begin{array}{lll}
& \alpha(s) = a_3s^p, \quad \psi_i(s) = a_is^p,\ i=1,2,
\end{array}
\end{equation*}
where $p,a_i>0,\ i=1,2,3$. Then, the conditions of Theorem \ref{thm:MainThm} hold provided $\vartheta :=
\left(\frac{a_2}{a_1}\right)^{p^{-1}}G e^{-\theta \tau}<1$. In fact, in this case the system \eqref{eq:AbstractSys}+\eqref{eq:ETdef} is exponentialy stable.
\end{corollary}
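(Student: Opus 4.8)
The plan is to distil the whole ET dynamics, in the present power-law case, into the single scalar recursion
\[
v_{k+1}\le\kappa\big(\vartheta^{p}v_{k},J_{k+1}\big),\qquad v_k:=V(x(t_k)),\ \ J_{k+1}:=t_{k+1}-t_k-\tau,
\]
and to read off both the hypotheses of Theorem~\ref{thm:MainThm} and the sharper exponential bound from it. Here $\kappa\in\KL$ is, by Remark~\ref{rem:Comparison-principle}, the flow of $\dot y=-(1-\sigma)\alpha(y)=-b\,y^{p}$ with $b:=(1-\sigma)a_3$, so that $\kappa(r,0)=r$, $\kappa(\cdot,t)$ is nondecreasing and $\kappa(r,\cdot)$ is nonincreasing. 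To obtain the recursion I would argue exactly as in the proof of Theorem~\ref{thm:MainThm}: on $[t_k,t_k+\tau]$ the ET condition is idle, and Corollary~\ref{Cor:StabSamp} together with the sandwich $a_1\|x\|_H^p\le V(x)\le a_2\|x\|_H^p$ gives $V(x(t_k+\tau))\le\tfrac{a_2}{a_1}(Ge^{-\chi\tau})^{p}V(x(t_k))=\vartheta^{p}V(x(t_k))$, while on $[t_k+\tau,t_{k+1}]$ the ET condition forces $D^+V\le-(1-\sigma)\alpha(V)$, whence Remark~\ref{rem:Comparison-principle} and monotonicity of $\kappa$ in its first argument yield the displayed inequality. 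This is nothing but the iteration \eqref{eq:Iterates} written in Lyapunov coordinates.

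From the recursion two monotone estimates drop out. Using $\kappa(r,\cdot)\le\kappa(r,0)=r$ repeatedly gives the decay in the number of events, $v_k\le\vartheta^{pk}v_0$, equivalently $\mathcal R^{(k)}(\|x_0\|_H,\{J_j\})\le\vartheta^{k}\|x_0\|_H$; using instead that $(v_k)$ is nonincreasing and evaluating the recursion at the largest gap gives decay in $\max_j J_j$, namely $\|x(t_k)\|_H\le\tilde\kappa(\|x_0\|_H,\max_{j\le k}J_j)$ with $\tilde\kappa:=\psi_1^{-1}\circ\kappa(\vartheta^{p}\psi_2(\cdot),\cdot)\in\KL$. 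In the canonical case $p=1$ these combine transparently: there $\kappa(r,t)=re^{-bt}$, the iteration factorizes, $\mathcal R^{(N)}(m,\{n_j\})=\vartheta^{N}e^{-b\sum_j n_j}m\le\vartheta^{N}e^{-b\max_j n_j}m$, and \eqref{eq:ETCond} holds verbatim with $\omega(N)=\vartheta^{N}\in\mathcal L$ and $\mu(r,t)=re^{-bt}\in\KL$, so Theorem~\ref{thm:MainThm} applies and gives UGAS.

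For the exponential sharpening I would bypass \eqref{eq:ETCond} and exploit the geometric per-sample contraction directly. In the case $p=1$ the product formula and $\sum_{j=1}^{k}J_j=t_k-k\tau$ give
\[
\|x(t_k)\|_H\le\vartheta^{k}e^{-b(t_k-k\tau)}\|x_0\|_H=\big(\vartheta e^{b\tau}\big)^{k}e^{-bt_k}\|x_0\|_H,
\]
which decays exponentially in $t_k$ in either regime $\vartheta e^{b\tau}\le 1$ or $\vartheta e^{b\tau}>1$ (in the latter using $k\le t_k/\tau$); inserting the inter-sample bounds \eqref{eq:FirstInterval}, \eqref{eq:SecondInterval}, which change the norm by at most a fixed factor over each interval of length at least $\tau$, yields $\|x(t)\|_H\le Ke^{-\lambda t}\|x_0\|_H$ for $x_0\in\operatorname{dom}(A)$, and density of $\operatorname{dom}(A)$ extends it to all of $H$, exactly as in Proposition~\ref{prop:ConvISS} and Theorem~\ref{thm:MainThm}.

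The hard part is the general exponent $p\ne1$. Then $\kappa$ is not positively homogeneous, so neither $\rho$ nor $\mathcal R^{(N)}$ factorizes and the two monotone bounds above do not assemble into the single product form required by \eqref{eq:ETCond} by a one-line argument; one must interpolate them (e.g.\ through a geometric mean $\min\{\vartheta^{N}m,\tilde\kappa(m,s)\}\le(\vartheta^N m)^{1/2}\tilde\kappa(m,s)^{1/2}$) and verify the $\KL/\mathcal L$ structure by hand. More seriously, the long inter-event intervals contribute essentially no decay near the origin, since a direct computation gives $\kappa(\vartheta^{p}V,J)/V\to\vartheta^{p}$ as $V\to0^{+}$, so the passage from the per-sample geometric decay $v_k\le\vartheta^{pk}v_0$ to a genuine exponential-in-$t$ envelope cannot rely on the gaps. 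I expect this to be the main obstacle, and I would resolve it by a dwell-time estimate: using the global Lipschitz bound on $f\circ F$ and the trigger \eqref{eq:ETdef} to show that the cycle lengths $\tau+J_k$ stay bounded along trajectories, so that the number of events grows at least linearly in $t$ and the factor $\vartheta^{pk}$ converts into an exponential rate in $t$.
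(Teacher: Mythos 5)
Your argument in the case $p=1$ is exactly the paper's proof. The paper takes $\kappa(m,n)=e^{-(1-\sigma)a_3 n}m$, computes $\rho(m,n)=\psi_1^{-1}\bigl(\kappa(\psi_2(Ge^{-\chi\tau}m),n)\bigr)=\vartheta\, e^{-(1-\sigma)a_3 n/p}\,m$, so that $\mathcal R^{(N)}(m,\{n_j\})\le\vartheta^N e^{-\iota\sum_j n_j}m$ with $\iota\le(1-\sigma)a_3/p$; condition \eqref{eq:ETCond} then holds with $\omega(N)=\vartheta^N$ and $\mu(r,t)=re^{-\iota t}$, and the exponential bound follows from $\sum_j J_j=t_k-k\tau$, i.e. $\vartheta^k e^{-\iota(t_k-k\tau)}=(\vartheta e^{\iota\tau})^k e^{-\iota t_k}\le e^{-\iota t_k}$ once $\iota\le\ln(\vartheta^{-1})/\tau$. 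This single choice of $\iota$ packages your two regimes $\vartheta e^{b\tau}\lessgtr 1$ into one line; otherwise your recursion $v_{k+1}\le\kappa(\vartheta^p v_k,J_{k+1})$, the factorization, and the patching via \eqref{eq:FirstInterval}, \eqref{eq:SecondInterval} and density of $\operatorname{dom}(A)$ are all precisely what the paper does.

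Where you diverge is $p\ne 1$, and the issue you raise there is real but your proposed repair is not the right one. The paper does not interpolate or prove a dwell-time bound: it uses the \emph{same} exponential $\kappa(m,n)=e^{-(1-\sigma)a_3 n}m$ for every $p$, which keeps $\rho$ positively homogeneous in $m$ and makes the whole iteration factorize regardless of $p$. Under the literal reading of Definition~\ref{def:ISSLyap} (dissipation $-\alpha(V(x))$ with $\alpha(s)=a_3 s^p$) this $\kappa$ is the comparison function only for $p=1$, exactly as you observe; the formula is valid for all $p$ only if the dissipation is read as $-\alpha(\|x\|_H)+\gamma(\|d\|_H)$, in which case $\alpha(\|x\|_H)\ge (a_3/a_2)V(x)$ and the comparison ODE is linear again — this is evidently the intended reading, and then your $p=1$ computation \emph{is} the entire proof. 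Your dwell-time idea, by contrast, cannot close the gap: the trigger in \eqref{eq:ETdef} may never fire, so $t_{k+1}=\infty$ is admissible, there is no a priori bound on the cycle length $\tau+J_{k+1}$, and on the infinite tail $[t_k+\tau,\infty)$ the only estimate available is $V(x(t))\le\kappa(V(x(t_k+\tau)),t-t_k-\tau)$, which for a genuinely nonlinear comparison function ($p>1$ near the origin) is only algebraic. So either one works with the linear comparison function, or the exponential conclusion is out of reach by these methods; your proposal should commit to the former rather than leave the $p\ne1$ case as an open interpolation-plus-dwell-time programme.
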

\begin{proof}
Let $0<\iota \leq \min \left(\frac{(1-\sigma)a_3}{p},\frac{\ln\left(\vartheta^{-1} \right)}{\tau}\right)$. Since here $\kappa(m,n)=e^{-(1-\sigma)a_3n}m$, a calculation shows that 
\begin{equation*}
\begin{array}{lll}
\mathcal{R}^{(N)}\left(m,\left\{n_j \right\}_{j=1}^N \right) \leq  \vartheta^{N}e^{-\iota\sum_{j=1}^Nn_j}m.
\end{array}
\end{equation*}  
Employing this is \eqref{eq:CombInterv}, we obtain
\begin{equation*}
\begin{array}{lll}
&\left\|x(t_k) \right\|_H  \leq \vartheta^{N}e^{-\iota\sum_{j=1}^kJ_j}\left\|x_0 \right\|_H \\
&\hspace{7mm}= \vartheta^k e^{-\iota \sum_{j=1}^{k}\left(t_j-t_{j-1}-\tau \right)}\left\|x_0 \right\|_H
\leq e^{-\iota t_k}\left\|x_0 \right\|_H.
\end{array}
\end{equation*}
Employing the latter in \eqref{eq:FirstInterval} and \eqref{eq:SecondInterval} then finishes the proof of the corollary.
\end{proof}

\section{Example}
Motivated by \cite{MPW21}, we consider the stabilization problem of a linear reaction-diffusion equation via distributed control $u:\R_+\to\R^m$. Let $L>0$. We are given $m$ functions
$b_k: [0,L] \to \R$, $k=1,\ldots,m$ which describe at which places the control input $u_k\in\R$
is acting. The function $c$ models the place-dependent reaction rate. The
system model is then  
\begin{equation}\label{closed-loop:sat}
\begin{split}
& w_t(t,x)=w_{xx}(t,x)+c(x)w(t,x)\\
&\qquad\qquad\quad + \sum_{k=1}^m b_k(x) f(u_k(t)),  \ t>0, \; x \in (0,L), \\
& w(t,0)=w(t,L)=0, \quad t>0, \\
& w(0,x)=w^0(x) , \quad x \in (0,L).
\end{split}
\end{equation}
We assume that the state space of this system is $X:=L_2(0,L)$ and that
$c,b_k \in X$, $k=1,\ldots,m$. 
Finally, $f$ is a globally Lipschitz map on $\R$. 

For $k \in \N$ and $L>0$, $H^{k}(0,L)$ denotes the
Sobolev space of functions from the space $L_2(0,L)$, which have weak
derivatives of order $\leq k$, all of which belong to
$L_2(0,L)$. $H^{k}_0(0,L)$ is the closure of $C_0^k(0,L)$ 
(the $k$-times
continuously differentiable functions with compact support in $(0,L)$)
 in
the norm of $H^{k}(0,L)$.

Define $\operatorname{dom}(A)=H^2(0,L)\cap H^1_0(0,L)$ and an operator
\begin{equation}
A=\partial_{xx}+c(\cdot)\mathrm{id}: \quad \operatorname{dom}(A)\subset X \to X.
\end{equation}
We rewrite \eqref{closed-loop:sat} in an abstract form
\begin{equation}
\label{newzero}
w_t(t,\cdot)=Aw(t,\cdot)+ \sum_{k=1}^m  b_k f(u_k(t)).
\end{equation}

We note that $A$ is selfadjoint and has compact resolvent (see \cite{MPW21} for all details here and in the following).  
Hence, the spectrum of $A$ consists of only isolated eigenvalues with finite multiplicity, 
see \cite[Theorem~III.6.29]{Kat95}. 
Furthermore, there exists a Hilbert basis $\{e_j\}_{j\geq 1}$ of $X$ consisting of eigenfunctions of $A$, associated with the sequence of  eigenvalues $\{\lambda_j\}_{j\geq 1}$. Note that $e_j(\cdot)\in \operatorname{dom}(A)$ for every $j\geq 1$ and
\begin{equation*}
-\infty<\cdots<\lambda_j<\cdots<\lambda_1\quad \textrm{and}\quad \lambda_j\underset{j\rightarrow +\infty}{\longrightarrow}-\infty.     
\end{equation*}

We consider (mild) solutions of the system \eqref{closed-loop:sat} (see \cite[Section 5.1]{CuZ20}), which exist and are unique for any initial condition in $X$ and for any $\{u_k\} \subset L_{1,loc}(\R_+)$.

We expand a solution $w(t,\cdot)\in \operatorname{dom}(A)$ of \eqref{newzero} as the coefficients $b_k$ into series in $e_j(\cdot)$, convergent in
$H_0^1(0,L)$,
\begin{equation*}
    \begin{aligned}
w(t,\cdot)&=\sum_{j=1}^{\infty}w_j(t)e_j(\cdot),
\ \ b_k(\cdot) = \sum_{j=1}^{\infty}b_{jk} e_j(\cdot),\ j\in\N.
    \end{aligned}
\end{equation*}
Now, we represent \eqref{newzero} as
 the infinite-dimensional system
\begin{align}
\label{sys-dim-infinie}
\dot{w}_j(t) 
&=   \lambda_jw_j(t)+  \mathbf{b}_{j} \cdot  f(u(t)) ,\qquad
j\in\N,
\end{align}
where $w(t):=\{w_j(t)\}_{j\in\N} \in \ell_2$, where 
\[
\ell_2:=\Big\{(x_k)_{k\in\N}\in\R^{\N}\ : \ \sum_{k=1}^\infty|x_k|^2 <\infty\Big\}.
\]
The scalar product in $\R^m$ we denote by \q{$\cdot$}, $f(u(t)) \in \R^m$ is the vector with entries $f(u_k(t))$
and $\mathbf{b}_{j}$ is the row vector with entries $b_{jk}$, $k=1,\ldots,m$.

Let $n\in\N$ be the number of nonnegative eigenvalues of $A$
and let $\eta>0$ be such that
\begin{equation}\label{refeta}
\forall j>n:\quad \lambda_j<-\eta<0.
\end{equation}

With the matrix notation
\begin{equation*}
z:= \small \begin{pmatrix} w_1 \\ \vdots \\ w_n
\end{pmatrix}, \ 
\mathbf{A}:= \small \begin{pmatrix}
 \lambda_1 &\! \cdots \! &    0           \\
 \vdots         & \!\ddots \! &   \vdots       \\
  0              &\! \cdots \!& \lambda_n
\end{pmatrix}, \  
\mathbf{B}:= \small\begin{pmatrix}  b_{11} &\!\! \cdots \!\! & b_{1m} \\ \vdots &&
    \vdots\\ b_{n1} &\!\! \cdots\!\! & b_{nm}  \end{pmatrix} 
\end{equation*}
the $n$ first equations of 
\eqref{sys-dim-infinie}
form the unstable finite-dimensional control system 
\begin{equation}\label{systfini}
\dot{z}(t)=\mathbf{A} z(t) + \mathbf{B}f(u(t)).
\end{equation}

Now, the application of our results implies that 
\begin{prop}
Let \eqref{systfini} with $f:=\id$ be stabilizable by some feedback $u(t) = Kz(t)$.
Let $f$ satisfy Assumption~\ref{assum:StabilizingFeed}
with $\theta_f$ satisfying \eqref{eq:PowerStabCond}.
Then there is $\tau^*>0$ and a sample-data controller achieving the stabilization of 
\eqref{closed-loop:sat}.
\end{prop}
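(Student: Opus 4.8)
The plan is to show that the abstract system \eqref{eq:AbstractSys} underlying \eqref{closed-loop:sat} satisfies the hypotheses of Corollary~\ref{Cor:StabSamp}, and then to invoke that corollary verbatim to obtain $\tau^*$ and the sampled-data controller. Concretely, I must produce a \emph{compact} exponentially stabilizing feedback $F\in\mathcal B(X,\R^m)$ (Assumption~\ref{assum:StabilizingFeed}) and check the global Lipschitz/sector hypothesis on $f$ (Assumption~\ref{eq:AssumpLipsch}). The latter is essentially free, so the real content is building $F$ from the finite-dimensional gain $K$ and verifying exponential stability of $A+BF$.

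First I would exploit the spectral decomposition of the self-adjoint operator $A$. Let $P_u$ be the orthogonal spectral projection onto $X_u:=\spann\{e_1,\dots,e_n\}$, the span of the eigenfunctions for the $n$ nonnegative eigenvalues, and set $P_s:=\Id-P_u$ with $X_s:=P_sX$. Both subspaces are invariant under $A$ and under $T$, one has $X=X_u\oplus X_s$, and by \eqref{refeta} the restriction $A_s:=A|_{X_s}$ generates an exponentially stable semigroup with decay rate $\eta$. Identifying $X_u$ with $\R^n$ through the coefficients $z=(w_1,\dots,w_n)$, the action of $A$ on $X_u$ is exactly $\mathbf A$ and the input operator restricted to $X_u$ is $\mathbf B$, which ties the construction to \eqref{systfini}.

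Next I would define the feedback $Fx:=KP_ux=Kz$. Since $P_u$ has rank $n$, the map $F$ has finite rank, hence is compact, and $F\in\mathcal B(X,\R^m)$. As $BF$ is bounded, $A+BF$ generates a $C_0$-semigroup $T_{BF}$ by the bounded perturbation theorem. Writing this generator in the coordinates $(z,\zeta)\in\R^n\times X_s$, it takes the lower block-triangular form
\begin{equation*}
\begin{pmatrix} \mathbf A+\mathbf BK & 0\\ P_sBK & A_s \end{pmatrix},
\end{equation*}
since the feedback sees only the unstable modes $z$, while the input $BKz$ injects into both blocks. The diagonal block $\mathbf A+\mathbf BK$ is Hurwitz by the stabilizability hypothesis, $A_s$ is exponentially stable, and the off-diagonal coupling $P_sBK$ is bounded. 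A standard cascade (variation-of-constants) estimate then yields $N\geq 1$ and $\xi>0$ with $\|T_{BF}(t)\|_{\mathcal B(X)}\leq Ne^{-\xi t}$, establishing Assumption~\ref{assum:StabilizingFeed}.

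Finally, $f\circ F=f\circ(KP_u)$ is globally Lipschitz as the composition of the componentwise globally Lipschitz $f$ with the bounded linear map $KP_u$, and the prescribed small $\theta_f$ satisfying \eqref{eq:PowerStabCond} furnishes the sector bound \eqref{eq:SectorCond}, so Assumption~\ref{eq:AssumpLipsch} holds. With both assumptions in place, Corollary~\ref{Cor:StabSamp} delivers $\tau^*>0$ and the sampled-data controller \eqref{eq:SampDat} exponentially stabilizing \eqref{closed-loop:sat}. I expect the main obstacle to be the exponential-stability estimate for the block-triangular generator, specifically confirming that the bounded coupling $P_sBK$ does not destroy the decay; but this is routine once both diagonal blocks are known to decay exponentially.
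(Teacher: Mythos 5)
Your proposal is correct and follows essentially the same route as the paper: build the feedback $Fx = KP_ux = Kz$, note it is finite-rank hence compact, establish exponential stability of $A+BF$ via the cascade (block-triangular) structure coupling the stabilized unstable modes to the exponentially stable residual part, and then invoke the sampled-data stabilization result. The only difference is one of detail — the paper delegates the cascade argument to the cited reference \cite{MPW21} rather than spelling out the variation-of-constants estimate as you do.
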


\begin{proof}
As \eqref{systfini} with $f:=\id$ is stabilizable by feedback $u(t) = Kz(t)$, for a certain matrix $K$, the results of \cite{MPW21} show that the same feedback exponentially stabilizes \eqref{sys-dim-infinie} with $f:=\id$ (as it is a cascade coupling of an exponentially stable system with an input-to-state stable system). 
As \eqref{sys-dim-infinie} is a reformulation of \eqref{closed-loop:sat}, then also 
\eqref{closed-loop:sat} will be exponentially stabilized by this feedback, which is of finite rank and hence a compact operator.
This, the assumptions of Proposition~\ref{Prop:NonlinPowStab} are fulfilled, and it ensures that the sampled version of $u(t)=Kz(t)$ stabilizes \eqref{closed-loop:sat}. \hfill $\blacksquare$
\end{proof}

\begin{remark}
Having developed a 
controller for \eqref{closed-loop:sat}, one can approach developing a corresponding event-triggered mechanism using the outcomes of Section~\ref{sec:Event-based stabilization}. 
To this end, one can use 1-homogeneous ISS Lyapunov functions for the disturbed closed-loop system developed in Proposition~\ref{prop:ConvISS}, or exploit quadratic ISS Lyapunov functions following \cite{MPW21}.
Due to page limits, we skip this analysis.
\end{remark}

\bibliographystyle{abbrv}
\bibliography{IEEEabrv,BibliographyKSE,MyPublications,Mir_LitList_NoMir}

\end{document}

\mir{Only sample-data result is stated right now.}

\mir{Here is how could a 'hyperbolic' example look like:

Consider the following infinite-dimensional system over $\ell_2$, consisting of countably many modes of the form
\begin{align}
\dot{w}_j &= v_j + a_jw_j\\
\dot{v}_j &=-\lambda_j w_j - \alpha v_j + a_j v_j,
\end{align}
for all $j\in\N$, and for certain $\alpha<0$ and where $a_j$ are all nonnegative and only finitely many of them are positive.

If all $a_j=0$, then this system is the Galerkin representation of the wave equation with PD damping:
\[
w_{tt}(t,x) = w_{xx}(t,x) - \alpha w_t(t,x), 
\]
with let say Dirichlet boundary conditions. 

One could add controllers, and do the same analysis as in the above parabolic example. However, I am not sure that if we will put the above system into the PDE form, it will look nice - though I have not checked.
}